\theoremstyle{plain}
\newtheorem{Thm}{Theorem}[section]
\newtheorem{Lem}[Thm]{Lemma}
\theoremstyle{definition}
\theoremstyle{remark}
\newtheorem{Exa}[Thm]{Example}
\numberwithin{equation}{subsection}
\begin{document}

\title[Factorization of Symplectic Matrices into elementary factors]%
{Factorization of Symplectic Matrices into elementary factors}
\author{Bj\"orn Ivarsson \and Frank Kutzschebauch \and Erik L{\o}w}
\address{Department of Mathematics of Systems Analysis\\
Aalto University\\
P.O. Box 11100, FI--00076 Aalto, Finland}
\address{Departement Mathematik\\
Universit\"at Bern\\
Sidlerstrasse 5, CH--3012 Bern, Switzerland}
\address{Department of Mathematics\\
University of Oslo\\
P.O. Box 1053, Blindern, NO--0316 Oslo, Norway}
\email{bjorn.ivarsson@aalto.fi}
\email{frank.kutzschebauch@math.unibe.ch}
\email{elow@math.uio.no}
\thanks{Part of this research was done while the authors were visitors at The Centre for Advanced 
Study (CAS) at the Norwegian Academy of Science and Letters. Bj\"orn Ivarsson was also 
supported by the Magnus Ehrnrooth Foundation and Erik L\o w by Bergens Forskningsstiftelse (BFS). The research of Frank Kutzschebauch  was partially supported by Schweizerische Nationalfonds Grant 200021-178730.}

\date{\today}
\setcounter{tocdepth}{3}
\begin{abstract} We prove that a symplectic matrix with entries in a ring with Bass stable
rank one can be factored as a product of elementary symplectic matrices. This also holds
for null-homotopic symplectic  matrices with entries in a Banach algebra or in the ring of complex valued
continuous functions on a finite dimensional normal topological space.
\end{abstract}
\maketitle
\bibliographystyle{amsalpha}
\tableofcontents
\section{Introduction and main results}\label{introduction}
In this paper $R$ denotes a commutative ring with identity, $\operatorname{SL}_{n}(R)$ the 
matrices of determinant $1$ with entries in $R$ and $E_{n}(R)$ the group generated by the elementary 
matrices. The problem of whether every matrix in $\operatorname{SL}_{n}(R)$ factors
as a product of elementary matrices, i.e. is an element of $E_{n}(R)$, has been 
studied extensively for various rings of polynomials and functions.
For a polynomial ring of one variable $R=k[x]$ the result is simple. For several variables
$R=k[x_1,\cdots,x_k]$ the result is not true for $n=2$ (\cite{Cohn:1966}) but by a famous 
result of Suslin (\cite{Suslin:1977}) it is true for $n \ge 3$. The second author and E.Doubtsov
recently proved that the result holds for rings with Bass stable rank $1$ (\cite{Doubtsov/Kutzschebauch:2019}) 
If $R$ is a unital commutative Banach algebra then every null-homotopic matrix in $\operatorname{SL}_n(R)$
is in $E_n(R)$ (\cite{Milnor:1971}). In the case of $R=C(X)$, the continuous complex functions
on a finite dimensional normal topological space, Vaserstein had previously proven the same
result for null-homotopic matrices (\cite{Vaserstein:1988}). Finally, the first two authors
(\cite{Ivarsson:2012}) 
proved the  result for null-homotopic matrices in the case of $R=\mathcal{O} (X)$, the holomorphic
functions on a reduced Stein space $X$, thus solving the so-called Vaserstein problem of 
Gromov (\cite{Gromov:1989}).  

The corresponding problem for the symplectic matrices, $\operatorname{Sp}_{2n}(R)$, has not been studied
to the same degree. The group generated by the elementary symplectic matrices is denoted 
by $\operatorname{Ep}_{2n}(R)$ (definitions will follow in Section 2). Again it follows easily that
$\operatorname{Sp}_{2n}(R) = \operatorname{Ep}_{2n}(R)$ for $R=k[x]$, this being a Euclidean ring. For $n \ge 2$ 
Kopeiko proved this for $R=k[x_1,\cdots,x_k]$ (\cite{Kopeiko:1978}) and Grunewald/Mennicke/
Vaserstein proved it for $R=\mathbb{Z}[x_1,\cdots,x_k]$. In this paper we take up the study for various
function spaces and we prove symplectic versions of the results in \cite{Milnor:1971}, 
\cite{Doubtsov/Kutzschebauch:2019} and \cite{Vaserstein:1988}. The Vaserstein problem
for null-homotopic holomorphic symplectic matrices turns out to be very complicated and
requires the use of Gromov's Oka principle for holomorphic sections of elliptic bundles
(\cite{Gromov:1989}). In
a forthcoming paper we solve the problem for $4 \times 4$ matrices. For one-dimensional
spaces $X$, however, the result is much easier and follows from our results here, for any
size matrix. More precisely, we will prove :

\begin{Thm}\label{t:Theorem 1}
If $R$ is a commutative Banach algebra with unity and  $M \in \operatorname{Sp}_{2n}(R)$ is null-homotopic,
then $M \in \operatorname{Ep}_{2n}(R)$.
\end{Thm}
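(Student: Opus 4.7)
The plan is to adapt Milnor's approach for $\operatorname{SL}_n$ over a Banach algebra \cite{Milnor:1971} to the symplectic group, splitting the argument into a homotopy step and a local factorization lemma near the identity. Since $M$ is null-homotopic, fix a continuous path $\gamma : [0,1] \to \operatorname{Sp}_{2n}(R)$ with $\gamma(0) = I_{2n}$ and $\gamma(1) = M$. Uniform continuity on the compact interval lets us choose a subdivision $0 = t_0 < t_1 < \cdots < t_N = 1$ so that each $N_i := \gamma(t_{i+1}) \gamma(t_i)^{-1}$ lies within any prescribed distance $\varepsilon > 0$ of $I_{2n}$ in the operator norm on $M_{2n}(R)$. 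Since $M = N_{N-1} \cdots N_0$, the theorem reduces to the following local claim: for some $\varepsilon > 0$, every $A \in \operatorname{Sp}_{2n}(R)$ with $\|A - I_{2n}\| < \varepsilon$ belongs to $\operatorname{Ep}_{2n}(R)$.

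For the local claim I would use a symplectic block Gauss decomposition. Writing $A = \left(\begin{smallmatrix} A_{11} & A_{12} \\ A_{21} & A_{22} \end{smallmatrix}\right)$ with $A_{11} \in M_n(R)$ invertible (automatic for $\varepsilon$ small), the symplectic relations $A^T J A = A J A^T = J$ force $S := A_{21} A_{11}^{-1}$ and $T := A_{11}^{-1} A_{12}$ to be symmetric and imply $A_{22} - S A_{12} = A_{11}^{-T}$. These give the factorization
\begin{equation*}
A \;=\; \begin{pmatrix} I_n & 0 \\ S & I_n \end{pmatrix} \begin{pmatrix} A_{11} & 0 \\ 0 & A_{11}^{-T} \end{pmatrix} \begin{pmatrix} I_n & T \\ 0 & I_n \end{pmatrix}.
\end{equation*}
The two outer factors are symmetric lower- and upper-block symplectic matrices, which decompose as products of the standard elementary symplectic generators indexed by the entries of $S$ and $T$; they therefore lie in $\operatorname{Ep}_{2n}(R)$ directly from the definitions in Section 2.

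The hard part is the middle Levi factor $D := \operatorname{diag}(A_{11}, A_{11}^{-T})$, where $A_{11}$ sits in $\operatorname{GL}_n(R)$ but is not a priori in $\operatorname{SL}_n(R)$. My plan is to peel off the determinant. Set $\mu := \exp\bigl(\tfrac{1}{n} \log \det A_{11}\bigr) \in R^\times$, well-defined and close to $1$ because $\det A_{11}$ is close to $1$, and write $A_{11} = \mu \cdot A_0$ with $A_0 \in \operatorname{SL}_n(R)$ close to $I_n$. Then $A_0$ is null-homotopic in $\operatorname{SL}_n(R)$ via $t \mapsto \exp(t \log A_0)$, so Milnor's theorem gives $A_0 \in E_n(R)$. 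The Chevalley embedding $B \mapsto \operatorname{diag}(B, B^{-T})$ sends each elementary $e_{ij}(t)$ with $i \ne j$ to a single elementary symplectic generator, hence maps $E_n(R)$ into $\operatorname{Ep}_{2n}(R)$ and places $\operatorname{diag}(A_0, A_0^{-T})$ there. The remaining scalar factor $\operatorname{diag}(\mu I_n, \mu^{-1} I_n)$ is a product of $n$ commuting images of $\operatorname{diag}(\mu, \mu^{-1})$ under the $n$ standard embeddings $\operatorname{Sp}_2 \hookrightarrow \operatorname{Sp}_{2n}$, and $\operatorname{diag}(\mu, \mu^{-1})$ belongs to $E_2(R) = \operatorname{Ep}_2(R)$ for every unit $\mu$ by the classical Whitehead identity. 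Assembling the subdivision with this local decomposition completes the proof; the case $n=1$ is simply Milnor's theorem for $\operatorname{SL}_2 = \operatorname{Sp}_2$.
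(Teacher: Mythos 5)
Your proposal is correct but follows a genuinely different route for the local factorization near the identity. You share with the paper the reduction, via uniform continuity of the null-homotopy, to factoring matrices $A\in\operatorname{Sp}_{2n}(R)$ close to $I_{2n}$. Where the paper then runs a direct Gauss--Jordan column reduction using the generators $K_{ij}$, $E_{ij}$, $F_{ij}$ (eliminating the first $n$ columns to force $A=I$, $C=0$, then killing the symmetric block $B$), you instead use the block Gauss (Iwasawa-type) decomposition
\[
A=\begin{pmatrix} I & 0 \\ S & I\end{pmatrix}\begin{pmatrix} A_{11} & 0 \\ 0 & A_{11}^{-T}\end{pmatrix}\begin{pmatrix} I & T \\ 0 & I\end{pmatrix},
\]
peel a scalar $\mu$ off the Levi factor so that $A_{11}=\mu A_0$ with $A_0\in\operatorname{SL}_n(R)$, and invoke Milnor's theorem for $\operatorname{SL}_n$ over a Banach algebra to dispose of $A_0$. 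The symmetry of $S$ and $T$ and the identity $A_{22}=A_{11}^{-T}+SA_{12}$ do follow from $A^TJA=AJA^T=J$, the extraction of the $n$-th root via $\exp\bigl(\tfrac1n\log\det A_{11}\bigr)$ is legitimate in a commutative Banach algebra near $1$ (and $\operatorname{tr}\log A_0=0$ by local injectivity of $\exp$, so the path $t\mapsto\exp(t\log A_0)$ really stays in $\operatorname{SL}_n$), and $\operatorname{diag}(B,B^{-T})$ sends $E_n(R)$ into $\operatorname{Ep}_{2n}(R)$ because $\operatorname{diag}(e_{ij}(t),e_{ji}(-t))=K_{ij}(t)$, which the paper explicitly writes as a product of $E$- and $F$-type generators. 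One terminological caveat: in the paper's conventions $K_{ij}(t)$ is not itself a single elementary symplectic matrix (only $E$- and $F$-type are), so your phrase ``a single elementary symplectic generator'' should be read as ``lies in $\operatorname{Ep}_{2n}$,'' which is all you need. Your route is more economical in that it cites Milnor rather than reproving a column-reduction lemma, but the paper's self-contained Lemma \ref{l:Gauss-Jordan} (Gauss--Jordan near the identity, with continuous dependence on $M$) is reused verbatim in the proof of Theorem \ref{t:Theorem 3}, so the paper's explicit construction buys a uniform framework for all three main theorems that your shortcut would need to recover separately.
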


\begin{Thm}\label{t:Theorem 2}
If $R$ has Bass stable rank $1$, then $\operatorname{Sp}_{2n}(R) = \operatorname{Ep}_{2n}(R)$.
\end{Thm}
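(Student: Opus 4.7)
The plan is to proceed by induction on $n$, following the outline of the $\operatorname{SL}_n$ argument in \cite{Doubtsov/Kutzschebauch:2019}, with the key step being the reduction of the first column of a symplectic matrix to the standard basis vector $e_1$ using only elementary symplectic multipliers. For the base case $n=1$ one has $\operatorname{Sp}_2(R) = \operatorname{SL}_2(R)$ and the elementary symplectic generators coincide with the usual $\operatorname{SL}_2$ elementary matrices, so the claim reduces directly to the $\operatorname{SL}_2$ case of the result of Doubtsov--Kutzschebauch.

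For the inductive step, let $M \in \operatorname{Sp}_{2n}(R)$ with first column $v = (a_1,\ldots,a_n,b_1,\ldots,b_n)^{T}$. Since $M$ is invertible, $v$ is a unimodular vector. The central task is to produce $E \in \operatorname{Ep}_{2n}(R)$ with $Ev = e_1$. The Bass stable rank one hypothesis says that whenever $(x,y)R = R$ one has $x + ry \in R^{\times}$ for some $r \in R$. I would apply this iteratively: pair up the $a_i$'s and $b_i$'s, at each stage using one elementary symplectic (short-root) row operation to fold two unimodular entries into one entry that is a unit, while tracking the compensating ``dual'' modification on the row indexed by $i+n$ that the symplectic generator forces. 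Once a single entry of the reduced column is a unit, the long-root elementary symplectic generators together with further short-root ones can be used to clear the remaining $2n-1$ entries and place the unit in the first slot, giving $Ev = e_1$.

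With the first column equal to $e_1$, the symplectic identity $M^{T}JM = J$ rigidly constrains the $(n{+}1)$st row and the first row of $EM$, so that after a bounded number of further elementary symplectic row and column operations one obtains a block matrix of the shape
\[
\begin{pmatrix} 1 & 0 & 0 & 0 \\ 0 & A & 0 & B \\ 0 & 0 & 1 & 0 \\ 0 & C & 0 & D \end{pmatrix},
\]
where $\begin{pmatrix} A & B \\ C & D \end{pmatrix} \in \operatorname{Sp}_{2(n-1)}(R)$. Applying the inductive hypothesis to this smaller symplectic matrix and lifting the resulting factorization through the obvious embedding $\operatorname{Ep}_{2(n-1)}(R) \hookrightarrow \operatorname{Ep}_{2n}(R)$ finishes the argument.

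The main obstacle I anticipate is the column-reduction itself. In the $\operatorname{SL}_n$ setting, every elementary row operation is available and Bass stable rank one applies directly; here, however, each short-root elementary symplectic generator couples two row operations (on rows $i,j$ and on rows $j+n,i+n$), and the long-root generators only modify a row using its dual. Controlling the parasitic effects of these coupled operations while iterating the stable-rank-one reduction, by appealing to the Chevalley commutator relations of the symplectic root system, is the technical heart of the proof.
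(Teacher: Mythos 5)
Your overall strategy (produce a unit pivot using Bass stable rank one, then eliminate) is the right one, and your base case $n=1$ and the final reduction to $\operatorname{Sp}_{2(n-1)}(R)$ are fine in principle. But the step you yourself flag as ``the technical heart of the proof'' --- getting a unit into one slot of the first column using only elementary symplectic multipliers, despite the fact that each short-root generator couples two row operations --- is precisely the step the paper must and does execute, and your sketch leaves it genuinely open. ``Pairing up the $a_i$'s and $b_i$'s and iterating stable rank one'' is not a proof: at each stage you would need to know that the remaining truncated vector is still unimodular after the parasitic modifications on the dual rows, and nothing in your outline guarantees that. Appealing to Chevalley commutator relations is a hope, not an argument.

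The paper avoids the iteration entirely. It expands $\det M = 1$ along the first column to get a \emph{single} relation $x_1 a_{11} + \sum_{i\ge 2} x_i a_{i1} + \sum_i y_i c_{i1} = 1$, applies $\operatorname{bsr}(R)=1$ \emph{once} to the pair $\bigl(a_{11},\, \sum_{i\ge 2} x_i a_{i1} + \sum_i y_i c_{i1}\bigr)$ to obtain $\alpha$ with $a_{11} + \alpha\bigl(\cdots\bigr) \in R^{*}$, and then realizes exactly this target value in position $(1,1)$ by the explicit left multiplications $K_{1i}(\alpha x_i)$ ($i\ge 2$), $E_{1i}(\alpha y_i)$ ($i\ge 2$), and a final corrective $E_{11}\bigl(\alpha y_1 + \sum_{i\ge 2}\alpha^2 x_i y_i\bigr)$. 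The bookkeeping point that makes this work --- and that your sketch is missing --- is that none of these operations touches row $n+1$, so $c_{11}$ stays fixed throughout, which lets the $E_{11}$ step cancel the cross-terms $-\sum \alpha^2 x_i y_i c_{11}$ exactly and land on the invertible element. Once a unit sits in position $(1,1)$, the paper simply continues the Gauss--Jordan elimination of Lemma~\ref{l:Gauss-Jordan} column by column (no induction on $n$ needed). So the gap in your proposal is concrete: you need the paper's explicit, one-shot Bass-stable-rank argument with its careful tracking of which entries remain unchanged, not an unresolved iterative scheme.
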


\begin{Thm}\label{t:Theorem 3}
If $X$ is a finite dimensional normal topological space and $M \in \operatorname{Sp}_{2n}(C(X))$ is null-homotopic,
then $M \in \operatorname{Ep}_{2n}(C(X))$.
\end{Thm}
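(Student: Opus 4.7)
The plan is to reduce Theorem~\ref{t:Theorem 3} to the Banach algebra case (Theorem~\ref{t:Theorem 1}) by exhausting $X$ with compact subsets and patching the local factorizations obtained there using the finite covering dimension of $X$. The obstacle to applying Theorem~\ref{t:Theorem 1} directly is that $C(X)$ is not itself a Banach algebra when $X$ is non-compact, so one must perform the factorization piece by piece and then re-glue.

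The null-homotopy provides a continuous map $H\colon X\times[0,1]\to\operatorname{Sp}_{2n}(\mathbb{C})$ with $H(\cdot,0)=I$ and $H(\cdot,1)=M$. On any compact $K\subset X$ the algebra $C(K)$ is a commutative Banach algebra in the supremum norm and $M|_K$ is null-homotopic via $H|_{K\times[0,1]}$, so Theorem~\ref{t:Theorem 1} yields an explicit factorization of $M|_K$ into elementary symplectic matrices. I would then exploit that $\dim X = d < \infty$ to choose a locally finite open cover $\{U_\alpha\}$ of $X$ by sets of compact closure whose order is at most $d+1$, and arrange, by a standard coloring argument, that the index set decomposes into $d+1$ classes of pairwise disjoint $U_\alpha$'s. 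Using a subordinate partition of unity $\{\chi_\alpha\}$, one would absorb one color class at a time: on the closure of the disjoint union of the sets in a given class, apply Theorem~\ref{t:Theorem 1} to the current residual, then multiply each resulting elementary factor by the appropriate $\chi_\alpha$ so that it extends by the identity outside its support. After $d+1$ such steps every point of $X$ will have been treated and the accumulated global product is the desired factorization of $M$.

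The main obstacle will be the cutoff step. Unlike in the $\operatorname{SL}_n$ setting, the symplectic elementary generators satisfy rigid Steinberg-type relations, and one cannot simply scale individual entries of a symplectic factorization by arbitrary functions without destroying the symplectic property. The argument will have to use these relations to rewrite each local factorization as a product of elementary symplectic matrices whose non-identity entries are susceptible to a single scalar cutoff, and then verify that after the rescaling the residual matrix on the next color class is still null-homotopic, so that the induction can be iterated. Ensuring compatibility of the cutoff procedure with the symplectic structure, and termination of the induction after $d+1$ steps, will be the technical heart of the proof, and is precisely the point at which the symplectic case diverges substantively from Vaserstein's original argument for $\operatorname{SL}_n(C(X))$.
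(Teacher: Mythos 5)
Your approach is genuinely different from the paper's, but it contains a gap that you yourself flag without closing. You reduce to compact pieces where $C(K)$ is a Banach algebra, invoke Theorem~\ref{t:Theorem 1} there, and then propose to patch the local factorizations using a partition of unity subordinate to a cover of order $d+1$, absorbing one color class at a time. The difficulty is exactly where you say it is: although a single elementary symplectic factor $\begin{pmatrix} I & B \\ 0 & I\end{pmatrix}$ can be cut off by replacing $B$ with $\chi B$ (and likewise for the lower-triangular type), a product $E_1\cdots E_N$ does not transform nicely under this operation --- $(E_1\cdots E_N)^\chi \ne E_1^\chi\cdots E_N^\chi$ --- so after one color class the ``residual'' matrix is not what you hope it is. You would need to show the residual is still symplectic, still null-homotopic, and still factorizable on the next class, and you would need a uniform bound on the number of factors across classes so the final product is finite. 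None of this is carried out, and it is precisely the point at which Vaserstein's original $\operatorname{SL}_n$ argument required substantial cleverness; you concede that the symplectic case ``diverges substantively'' from it but do not supply the divergent argument. As written, this is a plan for a proof rather than a proof.

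The paper avoids the patching problem entirely. It first proves a continuous Gram--Schmidt lemma: there are elementary symplectic matrices $F_1,\dots,F_L$, depending continuously on $M\in\operatorname{Sp}_{2n}(\mathbb{C})$, with $F_1\cdots F_L M \in \operatorname{Sp}(n) := \operatorname{Sp}_{2n}(\mathbb{C})\cap\operatorname{U}(2n)$. Applying this pointwise reduces to the case where the matrix takes values in the compact, simply connected group $\operatorname{Sp}(n)$. Finite-dimensionality of $X$ is then used not for a covering argument but to invoke the Calder--Siegel theorem, which upgrades a null-homotopy of a map from a finite-dimensional normal space into a compact simply connected Lie group to a \emph{uniform} null-homotopy. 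Writing $M_1 = (M_1 M_{(k-1)/k}^{-1})\cdots M_{1/k}$ for large $k$ then factors the matrix into finitely many continuous symplectic matrices all uniformly close to $I$, and each of these is handled by the local Gauss--Jordan lemma (Lemma~\ref{l:Gauss-Jordan}) with $R=\mathbb{C}$. In short: Gram--Schmidt plus Calder--Siegel plus local Gauss--Jordan, with no partition of unity, no cutoffs, and no induction on color classes. If you want to pursue your patching route, the Calder--Siegel uniform homotopy theorem is the tool you are missing, and once you have it the patching becomes unnecessary.
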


In Section 2 we will give definitions and some elementary observations. In Section 3 
we give examples and the remaining sections prove the theorems.

\section{Definitions}\label{definitions}

The symplectic group $\operatorname{Sp}_{2n}(R)$ is a subgroup of $\operatorname{SL}_{2n}(R)$. We shall write matrices
with block notation
\[ \begin{pmatrix} A & B \\ C & D \end{pmatrix} \]
where $ A, B, C $ and $ D $ are  $ (n \times n) $ matrices with entries in $R$ satisfying the symplectic
conditions
\begin{equation}\label{e:firstsymp}
A^TC=C^TA
\end{equation}
\begin{equation}\label{e:secondsymp}
B^TD=D^TB
\end{equation}
\begin{equation}\label{e:thirdsymp}
A^TD-C^TB=I
\end{equation}
where  $ I $ is the  $ (n\times n) $ identity matrix.

An \emph{elementary symplectic matrix} is either of the form
\[ \begin{pmatrix} I & B \\ 0 & I \end{pmatrix} \]
where $B$ is symmetric ($B=B^T$) or of the form 
\[ \begin{pmatrix} I & 0 \\ C & I \end{pmatrix} \]
where $C$ is symmetric. Products of matrices of the first type are
additive in $B$ and of the second type in $C$. Special cases are the
matrices $E_{ij}(a)$ when $B$ is the matrix with $a$ in position $ij$ and
$ji$ and otherwise zero. For $F_{ij}(a)$ the roles of $B$ and $C$ are
changed. Clearly any elementary matrix of the first type is the product
of matrices $E_{ij}(b_{ij})$ for $i \le j$ and similarly for  the second type.

We  notice that multiplying a matrix by $E_{ij}(a)$ from the left adds
$a$ times the (n+j)-th row to the i-th row and $a$ times the (n+i)-th row
to the j-th row. Multiplying by $F_{ij}(a)$ adds $a$ times the j-th row
to the (n+i)-th row and $a$ times the i-th row to the (n+j)-th row.

We also introduce the symplectic matrices $K_{ij}(a)$ defined by $B=C=0$ and
$A=I$ except in position ij, where there is an $a$. Finally, $D=(A^t)^{-1}$.
This equals $I$ except in position ji, where there is $-a$ if $i\ne j$ and 
$a^{-1}$ if $i=j$ (this requires $a \in R^*$). Multiplying a matrix $M$ from the left by $K_{ij}(a)$
adds $a$ times the j-th row to the i-th row and $-a$ times the (n+i)-th row to
the (n+j)-th row when $i\ne j$ and multiplies the i-th row by $a$ and the 
(n+i)-th row by $a^{-1}$ when $i=j$.

These matrices are products of elementary matrices :
\begin{equation}
K_{ii}(a) = E_{ii}(a-1)F_{ii}(1)E_{ii}(a^{-1}-1)F_{ii}(-a)
\end{equation}
and if $i\ne j$:
\begin{equation}
K_{ij}(a) = F_{jj}(-a)E_{ij}(1)F_{jj}(a)E_{ii}(a)E_{ij}(-1)
\end{equation}

An element $(x_1,\cdots,x_k) \in R^k$ is called \emph{unimodular} if
\[ \sum_{j=1}^{k} x_jR = R. \]
$R$ is said to have \emph{Bass stable rank} $k$ if $k$ is the smallest
integer such that for any unimodular
$(x_1,\cdots,x_{k+1}) \in R^{k+1}$ there exist $(y_1,\cdots,y_k) \in R^k$
such that $(x_1+y_1 x_{k+1},\cdots,x_k+y_k x_{k+1})$ is also unimodular.
We write $bsr(R)=k$. If no such $k$ exists we set $bsr(R)=\infty$. If 
$bsr(R)=1$, then for any $x_1,x_2 \in R$ such that $x_1R+x_2R=R$, there is
$y\in R$ such that $x_1+yx_2 \in R^*$.

If $R$ is a Banach algebra, then the $n \times n$ matrices with entries in $R$
is a normed vector space in the following way. If $M=(a_{ij})$ is a matrix with entries from $R$
(equipped with a norm $|| \cdot ||$), then $N=(||a_{ij}||)$ is a matrix of positive real numbers.
We can now apply any matrix norm to N and this gives a norm of M. These norms will all be equivalent. We say that $M \in \operatorname{Sp}_{2n}(R) $
is \emph{null-homotopic} if there is a continuous map $M(t)$, $0\le t \le 1$, into 
$\operatorname{Sp}_{2n}(R) $ such that $M(0)=I$ and $M(1)=M$. A matrix $M \in \operatorname{Sp}_{2n}(C(X)) $ is
said to be null-homotopic if $M$ is homotopic to the identity when regarded as a map from $X$ to
$\operatorname{Sp}_{2n}(\mathbb{C})$.

\section{Examples}\label{Examples}

We mention here the main examples from 
\cite{Doubtsov/Kutzschebauch:2019}. The interested reader should consult
that paper for further examples.

\begin{Exa}\label{e:starshaped}
If $\Omega \subset \mathbb{C}^n$ is a bounded star-shaped domain and $A(\Omega)$ is the set of 
holomorphic functions in $\Omega$ which are continuous up to the boundary, then every element
$M \in \operatorname{Sp}_{2n}(A(\Omega))$ is null-homotopic under the homotopy $M(t)(z)=M(tz)$ (assuming 
$\Omega$ is star-shaped with respect to the origin). Hence $\operatorname{Sp}_{2n}(A(\Omega))=\operatorname{Ep}_{2n}(A(\Omega))$
by Theorem \ref{t:Theorem 1}. 

For the disc algebra $A(\mathbb{D})$ this result also follows from Theorem \ref{t:Theorem 2}
since the Bass stable rank of $A(\mathbb{D})$ equals one. (See Jones, Marshall and Wolff 
(\cite{Jones:86}) and Corach and Suarez (\cite{Corach:1985}).) It is known that the Bass
stable rank of the disc and ball algebras in higher dimensions is strictly greater than one,
so these cases do not follow from Theorem \ref{t:Theorem 2}.
\end{Exa}

\begin{Exa}\label{e:O(X)}
If $X$ is an open Riemann surface, then $\mathcal{O}(X)$ has Bass stable rank one. This follows
from the sharpened version of Wedderburn's lemma which can be found in R.Remmert's textbook
(page 137 of \cite{Remmert:1998}). Hence $\operatorname{Sp}_{2n}(\mathcal{O}(X))=\operatorname{Ep}_{2n}(\mathcal{O}(X))$ by
Theorem \ref{t:Theorem 2} and every $M \in \operatorname{Sp}_{2n}(\mathcal{O}(X))$ is null-homotopic. 
This provides an easy proof of the symplectic Vaserstein problem in dimension one.
\end{Exa}

\begin{Exa}\label{e:Hinfinity}
Treil proved that $\mbox{H}^{\infty}(\mathbb{D})$ has Bass stable rank one 
(\cite{Treil:1992}). Hence
$\operatorname{Sp}_{2n}(\mbox{H}^{\infty}(\mathbb{D}))=\operatorname{Ep}_{2n}(\mbox{H}^{\infty}(\mathbb{D}))$ by
Theorem \ref{t:Theorem 2} and every $M \in \operatorname{Sp}_{2n}(\mbox{H}^{\infty}(\mathbb{D}))$ is null-homotopic. 
\end{Exa}

\section{Proof of Theorem \ref{t:Theorem 1}}\label{Proof of theorem 1.1}

In this section $R$ is a commutative Banach algebra with unity.
$\operatorname{Sp}_{2n}(R)$ is a metric space with metric induced by a norm of $M_{2n}(R)$.
The main part of the proof consists in showing that the Gauss-Jordan process can be carried out by 
multiplying by elementary symplectic matrices.  
If we start with a matrix sufficiently close to the identity, there is no 
need to change the order of the rows and the diagonal elements will stay close
to  $1$ during the whole process. It is clear that this process is well defined
and continuous in a neighbourhood of $I \in \operatorname{Sp}_{2n}(R)$ and even holomorphic in case
$R=\mathbb{C}$.

Hence we start with a matrix \[ M = \begin{pmatrix} A & B \\ C & D \end{pmatrix} 
\] sufficiently close to the identity.
We denote $A=(a_{ij})$ and similarly for $B$,$C$ and $D$. 
We shall now multiply successively from the left by 
elementary matrices, but use the same notation for the result,
i.e the entries of the matrices $A,B,C$  and $D$ will change in every step.
The goal is to end up with the identity matrix.

Multiplying by $K_{11}(a_{11}^{-1})$ gives $a_{11}= 1$. We then proceed by 
multiplying by $K_{i1}(-a_{i1})$ for $i=2,\cdots,n$ to achieve $a_{i1}=0$ for $i > 1$. 
Next step is to multiply by $F_{i1}(-c_{i1})$ for $i=1,\cdots,n$ to obtain $c_{i1}=0$ 
for all $i$. We are now done with the first column. It also follows by (\ref{e:firstsymp}) 
that the first row of $C$ is zero.The steps that follow will not affect this column or
row.

We now multiply by $K_{22}(a_{22}^{-1})$ to get $a_{22}=1$. Then multiply by 
$K_{i2}(-a_{12})$ for $i=1,3,\cdots,n$ to get $a_{i2}=0$ for those $i$. Finally 
multiply by $F_{i2}(-c_{i2})$ for $i\ge 2$ to get $c_{i2}=0$ for $i\ge 2$.We 
already know that $c_{12}=0$ so the second column of $C$ is zero and we are
done with the second column. Again by (\ref{e:firstsymp})
it follows that the second row of $C$ is also zero and the first two columns of
$M$ and rows of $C$ are not affected by the remaining steps.

Continuing in this way on the first $n$ columns gives $A=I$ and $C=0$. By (\ref{e:thirdsymp}) 
and (\ref{e:secondsymp}), $D=I$ and $B$ is symmetric. Multiplying by 
$E_{ij}(-b_{ij})$ for $1 \le j \le i \le n$ annihilates $B$ and we get $M=I$ ,
the $2n \times 2n$ identity matrix. We have now proved

\begin{Lem}\label{l:Gauss-Jordan}(Gauss-Jordan process for symplectic matrices)
Let $R$ be a commutative Banach algebra with unity. 
There is a neighbourhood $V$ of the identity in $\operatorname{Sp}_{2n}(R)$ 
and elementary matrices $E_1,\cdots,E_N$ $(N=(N(n))$, depending continuously on 
$M\in V$, such that  $E_i(I)=I$ and $M=E_1\cdots E_N$ for all $M \in V$.
\end{Lem}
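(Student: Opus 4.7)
The plan is to formalize the step-by-step Gauss--Jordan procedure described in the paragraphs preceding the lemma and to verify that on a sufficiently small neighbourhood of $I$ every step is well-defined and depends continuously on $M$.

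First I would fix a neighbourhood $V_0$ of $I$ on which the $(1,1)$-entry $a_{11}$ of the top-left block is a unit in $R$; this exists because the group of units of a unital Banach algebra is open and contains $1$. On $V_0$ the map $M \mapsto K_{11}(a_{11}^{-1})\,M$ is continuous and fixes $I$. Arguing inductively, after each step of the reduction I shrink the current neighbourhood so that the next pivot $a_{jj}$ to be used lies in $R^{\ast}$; since the updates are continuous compositions, and since on $M = I$ every intermediate pivot stays equal to $1$ and every off-diagonal entry stays equal to $0$, finitely many such shrinkings produce a single neighbourhood $V$ of $I$ on which the entire algorithm is well-defined and continuous.

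On $V$ I carry out exactly the sweep described in the text: for each column $j = 1,\dots,n$ in turn, I multiply on the left first by $K_{jj}(a_{jj}^{-1})$, then by $K_{ij}(-a_{ij})$ for $i \ne j$, and finally by $F_{ij}(-c_{ij})$ for all $i$, where $a_{ij}$ and $c_{ij}$ denote the current entries at that stage. The symplectic identities (\ref{e:firstsymp})--(\ref{e:thirdsymp}) guarantee, as already observed, that the zeros produced at earlier stages are preserved and that, after the $n$ column sweeps, one is left with a matrix of the form $\begin{pmatrix} I & B \\ 0 & I \end{pmatrix}$ with $B = B^T$. A final sweep by factors $E_{ij}(-b_{ij})$ for $1 \le j \le i \le n$ annihilates $B$ and yields the identity.

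It remains to observe that each factor $E_k$ is built by evaluating $K_{ij}$, $F_{ij}$ or $E_{ij}$ on a continuous function of the entries of $M$, hence depends continuously on $M \in V$, and that at $M = I$ every intermediate pivot equals $1$ and every other argument equals $0$, so that $K_{jj}(1) = K_{ij}(0) = F_{ij}(0) = E_{ij}(0) = I$, giving $E_k(I) = I$. The total number of factors $N = N(n)$ depends only on $n$. The main technical obstacle is not algebraic but rather the uniform choice of $V$: one must guarantee that every pivot $a_{jj}$ encountered during the entire reduction is invertible. This is ensured by the openness of $R^{\ast}$ in a unital Banach algebra together with the fact that the constant trajectory at $I$ passes comfortably through pivots equal to $1$ at every stage, so continuity supplies the required uniform neighbourhood.
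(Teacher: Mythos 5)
Your proposal is correct and follows essentially the same route as the paper: the same symplectic Gauss--Jordan sweep (pivot with $K_{jj}$, clear the column of $A$ with $K_{ij}$, clear the column of $C$ with $F_{ij}$, finish off $B$ with $E_{ij}$), with the neighbourhood $V$ chosen by openness of the unit group so that every pivot remains invertible and each factor is $I$ at $M=I$. The only cosmetic difference is that you apply $F_{ij}(-c_{ij})$ for all $i$ in every column sweep, whereas the paper notes that the symplectic relation $A^TC = C^TA$ already forces $c_{ij}=0$ for $i<j$ after the earlier sweeps, so those extra factors are identities; this changes nothing.
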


\begin{proof}[Proof of Theorem \ref{t:Theorem 1}]
Let $M$ be a null-homotopic matrix in $\operatorname{Sp}_{2n}(R)$ and denote the homotopy by $M_t$.
By uniform continuity of $M_t$ (and a lower bound on $||M_t||$) it follows that there
is a $\delta > 0$ such that $M_tM_{t'}^{-1} \in V$ whenever $|t-t'| < \delta$. 
Hence for $k > \frac{1}{\delta}$ we have
$$
M=M_1= (M_1 M_{1-\frac{1}{k}}^{-1})(M_{1-\frac{1}{k}} M_{1-\frac{2}{k}}^{-1}) \cdots
M_{\frac{1}{k}}
$$
Hence $M$ is a product of $k$ matrices in $V$ and each of these is a product of $N$
elementary matrices by the previous lemma. This completes the proof.
\end{proof}

\section{Proof of Theorem \ref{t:Theorem 2}}\label{Proof of theorem 1.2}

As for the Gauss-Jordan process we start with a matrix \[ M = \begin{pmatrix} A & B \\ C & D \end{pmatrix} \]
and multiply from the left by elementary matrices without changing the notation. The Bass stable
rank condition will allow us to produce invertible pivots so we can proceed with Gauss-Jordan as
above. 

Expanding the determinant along the first column gives the existence of $x_i$ and $y_i$, $1 \le i \le n$
such that 
$$
x_1 a_{11} + \sum_{i=2}^n x_i a_{i1} + \sum_{i=1}^n y_i c_{i1} = 1
$$  
By the Bass stable rank condition there is $\alpha \in R$ such that
$$
a_{11} + \sum_{i=2}^n \alpha x_i a_{i1} + \sum_{i=1}^n \alpha y_i c_{i1} \in R^*
$$  
We now multiply from the left by $K_{1i}(\alpha x_i)$ for $2 \le i \le n $. The first column
now becomes 
$$
(a_{11} + \sum_{i=2}^n \alpha x_i a_{i1}, a_{21},\cdots,a_{n1},c_{11},c_{21}-\alpha x_2 c_{11},
\cdots, c_{n1}-\alpha x_n c_{11})^T
$$
We then multiply by $E_{1i}(\alpha y_i)$ for $2 \le i \le n $. The first element now becomes
$$
a_{11} + \sum_{i=2}^n \alpha x_i a_{i1} + \sum_{i=2}^n \alpha y_i c_{i1} -
\sum_{i=2}^n \alpha^2 x_i y_i c_{11}
$$
and the value of $c_{11}$ does not change. We now multiply by $E_{11}(\alpha y_1 +\sum_{i=2}^n \alpha^2 x_i y_i)$
and the first element becomes
$$
a_{11} + \sum_{i=2}^n \alpha x_i a_{i1} + \sum_{i=1}^n \alpha y_i c_{i1}
$$
which is invertible and we may proceed as in Gauss-Jordan to make the first column equal
to $e_1$. We can now proceed to the next column, sticking to the same notations ($x_i,y_i,\alpha $).
After multiplication by $K_{2i}(\alpha x_i)$ for $3 \le i \le n $ the first column is
$$
(a_{12},a_{22} + \sum_{i=3}^n \alpha x_i a_{i2}, a_{32},\cdots,a_{n2},c_{12},c_{22},c_{32}-\alpha x_3 c_{22}
\cdots, c_{n2}-\alpha x_n c_{22})^T
$$
Multiplying by $E_{2i}(\alpha y_i)$ for $i=1,3,\cdots,n $ produces
$$
a_{22} + \sum_{i=3}^n \alpha x_i a_{i2} + \sum_{i\ne 2} \alpha y_i c_{i2} -
\sum_{i=3}^n \alpha^2 x_i y_i c_{22}
$$
in position $22$ without changing $c_{22}$. Finally we multiply by
$E_{22}(\alpha y_2 +\sum_{i=3}^n \alpha^2 x_i y_i)$ to produce an invertible
element in position $22$ and we may proceed with Gauss-Jordan. It is clear that we 
can continue this process and complete the proof as in the Gauss-Jordan process.

\section{Proof of Theorem \ref{t:Theorem 3}}\label{Proof of Theorem 1.3}

The proof consists of three ingredients; the Gauss-Jordan elimination result for 
$R=\mathbb{C}$, the Gram-Schmidt process for complex symplectic matrices and a 
result on uniform homotopies by Calder and Siegel (\cite{Calder/Siegel:1978},\cite{Calder/Siegel:1980}).

Let us first see how to carry out the Gram-Schmidt process for a matrix $M \in \operatorname{Sp}_{2n}(\mathbb{C})$.
Let \[v_1,\cdots,v_n,w_1,\cdots,w_n\] denote the rows of $M$. We shall now proceed to multiply $M$ by the elementary matrices introduced above, but will still refer to the result by the same notation, i.e. $M$ and $v_1,\cdots,w_n$ will change in every step.

The first step is to make all the $v$'s orthogonal. Multiplication by $K_{i1}(\frac{-<v_i,v_1>}{||v_1||^2})$ for $i=2,\cdots,n$ removes the components of $v_2,\cdots,v_n$ along $v_1$, i.e. we get $v_i \perp v_1$  for $i \ge 2$. This also changes the $w$'s. We can now continue to multiply by $K_{i2}(\frac{-<v_i,v_2>}{||v_2||^2})$ for $i \ge 3$, etc. The end result makes all the $v$'s orthogonal.

In the next step we make the $v$'s orthonormal by multiplying by $K_{ii}(\frac{1}{||v_i||})$ for $i=1,\cdots,n$. Notice that the $w$'s change in all the above steps.

In the final step we make $w_j$ orthogonal to $v_i$ for $i\ge j$. Starting with $w_1$, we multiply by $F_{1j}(-<w_1,v_j>)$ for $j=1,\cdots,n$ to make $w_1$ orthogonal to all the $v$'s. This changes $w_2,\cdots,w_n$. We then continue to multiply by $F_{2j}(-<w_2,v_j>)$ for $j=2,\cdots,n$ to make $w_2$ orthogonal to $v_2,\cdots,v_n$. This changes $w_3,\cdots,w_n$, but not $w_1$. Continuing like this produces the desired result.

We shall  see that $M$ is now in $\operatorname{SU}(2n) $. The matrix $MM^{*}$ is symplectic since
$\operatorname{Sp}_{2n}(\mathbb{C})$ is closed under transposition and complex conjugation. 
It is also Hermitian and satisfies $A=I$ by construction. By the final step $C$ has zeroes 
on and above the diagonal. By (\ref{e:firstsymp}), $C=C^t$ hence $C=0$. Since $MM^*$ is 
Hermitian it follow that $B=C^*=0$. Finally, (\ref{e:thirdsymp}) gives us that $D=I$.

It is clear from the construction that all the matrices we used to multiply our original matrix by depend continuously on the initial matrix. Denoting the compact symplectic group $\operatorname{Sp}_{2n}(\mathbb{C}) \cap \operatorname{U}(2n)$ by $\operatorname{Sp}(n)$ we have now proved:

\begin{Lem}\label{l:Gram-Schmidt}(Gram-Schmidt process for symplectic matrices) For every integer $n$ there is an integer $L(=L(n))$ and elementary symplectic matrices $F_1,\cdots,F_L$, depending continuously on $M \in \operatorname{Sp}_{2n}
(\mathbb{C})$ such that $F_1\cdots F_LM \in \operatorname{Sp}(n)$ for all $M$.
\end{Lem}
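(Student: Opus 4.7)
The plan is to execute the three-step procedure already outlined (orthogonalize the first $n$ rows, normalize them, then kill the projections of the last $n$ rows onto the $v_j$'s) and then verify that the resulting matrix lies in $\operatorname{Sp}(n) = \operatorname{Sp}_{2n}(\mathbb{C}) \cap \operatorname{U}(2n)$. For $M \in \operatorname{Sp}_{2n}(\mathbb{C})$ with rows $v_1,\dots,v_n,w_1,\dots,w_n$, I would first apply $K_{ij}(-\langle v_i,v_j\rangle/\|v_j\|^2)$ in the standard Gram--Schmidt order to make the $v$'s pairwise orthogonal, then apply $K_{ii}(1/\|v_i\|)$ to normalize, and finally apply $F_{ij}(-\langle w_i,v_j\rangle)$ in the triangular order indicated to arrange that $w_i \perp v_j$ for all $j \geq i$. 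Each factor depends continuously on the current entries of $M$, and the total number $L(n)$ is explicit: $\tfrac{n(n-1)}{2} + n + \tfrac{n(n+1)}{2} = n^2+n$.

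To conclude that the resulting matrix $\tilde M$ lies in $\operatorname{U}(2n)$, I would look at $P = \tilde M\tilde M^{*}$. Since $\operatorname{Sp}_{2n}(\mathbb{C})$ is closed under transpose and complex conjugation, $P$ is symplectic; it is also Hermitian by construction. The normalization step forces the upper-left block of $P$ to equal $I$, i.e.\ $A=I$. The final orthogonalization step makes the lower-left block $C$ strictly upper triangular (zeros on and above the diagonal), and the symplectic relation (\ref{e:firstsymp}) gives $C=C^T$; together these force $C=0$. The Hermitian property then yields $B=C^{*}=0$, and (\ref{e:thirdsymp}) gives $D=I$. Hence $P=I$ and $\tilde M \in \operatorname{U}(2n) \cap \operatorname{Sp}_{2n}(\mathbb{C}) = \operatorname{Sp}(n)$.

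The main obstacle is showing that the denominators $\|v_j\|$ never vanish, so that the prescribed factors are genuinely defined and depend continuously on $M$. This is where the symplectic (or even just invertibility) hypothesis enters: after every partial multiplication, the running matrix is still in $\operatorname{Sp}_{2n}(\mathbb{C})$ and hence invertible, so its rows stay linearly independent. In particular, once $v_1,\dots,v_{j-1}$ have been made orthogonal and the component of $v_j$ along them has been removed, $v_j$ is the nonzero projection of the $j$-th row of an invertible matrix onto the orthogonal complement of $\operatorname{span}(v_1,\dots,v_{j-1})$; the same argument works throughout the process. With the denominators controlled, continuity of all the elementary factors in $M$ is automatic, which completes the proof.
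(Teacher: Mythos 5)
Your proposal follows the paper's proof essentially verbatim: the same three stages of left-multiplication by $K_{ij}$, $K_{ii}$, and $F_{ij}$, followed by the same verification that $\tilde M\tilde M^*=I$ using the block structure of the symplectic relations. Your explicit count $L(n)=n^2+n$ and the remark that invertibility keeps the denominators $\|v_j\|$ nonzero are small, correct additions to the paper's argument but do not change the approach.
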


The final ingredient in the proof of Theorem \ref{t:Theorem 3} is a version of a result of Calder and Siegel (\cite{Calder/Siegel:1978}, \cite{Calder/Siegel:1980}). Here $||\cdot||$ denotes any matrix norm. Since the compact symplectic group $\operatorname{Sp}(n)$ is simply connected 
(Proposition 13.12, \cite{Hall:2015}), we get the following result.


\begin{Thm}\label{t:Calder/Siegel}(Calder/Siegel) Let $X$ be a finite dimensional normal 
space and assume $M\colon X\to \operatorname{Sp}(n)$ is null-homotopic. Then there is a 
uniform homotopy $M_t\colon X\to \operatorname{Sp}(n)$ with $M_1=M$ and $M_0=I$, i.e. 
for any $\epsilon > 0$ there is a $\delta > 0$ such that $||M_t(x)-M_{t^{'}}(x)||< \epsilon $ 
for all $x \in X$ and $|t-t^{'}|<\delta $.
\end{Thm}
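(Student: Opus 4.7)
The plan is to derive the statement as an essentially immediate application of the uniform-homotopy theorems of Calder and Siegel cited as \cite{Calder/Siegel:1978} and \cite{Calder/Siegel:1980}. Those papers establish that for a finite-dimensional normal space $X$ and a suitable target $Y$---roughly, a compact simply connected metric ANR---any continuous null-homotopy of a map $X \to Y$ can be replaced by one that is uniform in the homotopy parameter, in exactly the sense required here.

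The first step is to verify that $Y = \operatorname{Sp}(n)$ satisfies the hypotheses on the target. As the closed subgroup $\operatorname{Sp}_{2n}(\mathbb{C}) \cap \operatorname{U}(2n)$ of the compact Lie group $\operatorname{U}(2n)$, the compact symplectic group is itself a compact Lie group, hence a smooth compact manifold and in particular a compact metric ANR with the distance induced by any matrix norm on $M_{2n}(\mathbb{C})$. Simple connectedness is the content of Proposition~13.12 of \cite{Hall:2015}, as already flagged in the remark preceding the statement. With these two properties in hand, the second step is to apply the Calder--Siegel result to the given null-homotopic map $M \colon X \to \operatorname{Sp}(n)$; this produces a continuous homotopy $M_t \colon X \to \operatorname{Sp}(n)$ with $M_0 = I$ and $M_1 = M$ for which $t \mapsto M_t$ is uniformly continuous from $[0,1]$ into the space of $\operatorname{Sp}(n)$-valued continuous functions on $X$ equipped with the supremum norm, and this unwinds precisely to the $\varepsilon$-$\delta$ condition asserted.

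The only genuine obstacle is therefore bibliographic rather than mathematical: one must isolate, among the several closely related uniform-homotopy statements in \cite{Calder/Siegel:1978} and \cite{Calder/Siegel:1980}, the exact version that applies to a compact simply connected Lie-group target mapped out of a finite-dimensional normal space. Because $\operatorname{Sp}(n)$ is as well-behaved as such a target can reasonably be---compact, smooth, simply connected, with finitely generated higher homotopy---the hypotheses of their main uniform-homotopy theorem are satisfied in the most favourable way possible, and no additional work is required beyond this verification.
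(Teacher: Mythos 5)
Your proposal matches the paper's approach: the paper likewise presents this theorem as a direct consequence of the Calder--Siegel uniform-homotopy theorems for maps out of a finite-dimensional normal space, with the only verification being that $\operatorname{Sp}(n)$ is simply connected (via Proposition 13.12 of \cite{Hall:2015}) so that the compact-ANR target hypotheses are met. Since the paper offers no further argument beyond this citation and hypothesis check, your reduction (compact Lie group $\Rightarrow$ compact metric ANR, plus simple connectedness) is exactly what is intended.
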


By writing \[M=(M_1M_{\frac{k-1}{k}}^{-1})(M_{\frac{k-1}{k}}M_{\frac{k-2}{k}}^{-1})\cdots M_{\frac{1}{k}}\] 
for some large $k$ it follows that for any $\epsilon >0$ there are finitely many continuous 
matrices $N_1,\cdots,N_k$ in $\operatorname{Sp}(n)$ such that $M=N_1\cdots N_k$ and $||I - N_j(x)|| 
< \epsilon $ for all $x\in X$ and $j$. We are now ready to prove Theorem \ref{t:Theorem 3}.

\begin{proof}[Proof of Theorem \ref{t:Theorem 3}]

Let $P_t$ denote the null-homotopy, i.e. $P_t \colon X \to \operatorname{Sp}_{2n}(\mathbb{C})$ 
with $P_1=M$ and $P_0=I$. By Lemma \ref{l:Gram-Schmidt} there are elementary symplectic 
matrices $F_1,\cdots,F_L$ such that $V_t = F_1(P_t)F_2(P_t)\cdots F_L(P_t)P_t$ is a null-homotopy 
with values in $\operatorname{Sp}(n)$ such that $V_1 = F_1(M)\cdots F_L(M)M$.

By Theorem \ref{t:Calder/Siegel} there is a uniform null-homotopy $M_t \colon X \to \operatorname{Sp}(n)$ with \[M_1=F_1(M)\cdots F_L(M)M\] and by the above comment there are finitely many continuous matrices $N_1,\cdots,N_k$ in $ \operatorname{Sp}(n)$ such that $M_1(x)=N_1(x)\cdots N_k(x)$ for all $x\in X$ and we may choose $k$ such that all values $N_j(x)$ lie in the neighbourhood $V$ of Lemma \ref{l:Gauss-Jordan}.

It now follows that we can write \[ F_1(M(x))\cdots F_L(M(x))M(x)= \prod_{j=1}^k \prod_{i=1}^N E_i (N_j(x))\] hence this gives us \[ M(x)= F_L^{-1}(M(x))\cdots F_1^{-1}(M(x)) \prod_{j=1}^k \prod_{i=1}^N E_i (N_j(x))\]

All the matrices on the right-hand side are elementary symplectic matrices depending continuously on $x\in X$. This completes the proof of the theorem.
\end{proof}


\begin{thebibliography}{99}

\bibitem[Coh66]{Cohn:1966}
P.M.Cohn, \emph{On the structure of the $GL_2$ of a ring}, Inst. Hautes Etudes Sci. Publ. Math. \textbf{30} (1966), 5--53

\bibitem[CS78]{Calder/Siegel:1978}
Allan Calder and Jerrold Siegel, \emph{Homotopy and uniform homotopy}, Trans. Amer. Math. Soc. \textbf{235} (1978), 245--270

\bibitem[CS80]{Calder/Siegel:1980}
Allan Calder and Jerrold Siegel, \emph{Homotopy and uniform homotopy. II.}, Proc. Amer. Math. Soc. \textbf{78}, (1980), no.2, 288--290

\bibitem[CoSu85]{Corach:1985}
G.Corach and F.D. Suarez, \emph{Stable rank in holomorphic function algebras}, Illinois J. Math
\textbf{29} ,no.4 (1985), 627-639 

\bibitem[DoKu]{Doubtsov/Kutzschebauch:2019}
E.Doubtsov and F.Kutzschebauch, \emph{Factorization by elementary matrices, null-homotopy and 
products of exponentials for invertible matrices over rings}, arXiv:1901.10714v2 (2019), 1--12


\bibitem[Gro89]{Gromov:1989}
Mikhael Gromov, \emph{Oka's principle for holomorphic sections of elliptic bundles}, J. Amer. Math. Soc. \textbf{2} (1989), 851--897.

\bibitem[GMV91]{Grunewald:1991}
Fritz Grunewald, Jens Mennicke, and Leonid Vaserstein, \emph{On symplectic groups over polynomial rings}, Math. Z. \textbf{206} (1991), 35--56.

\bibitem[H15]{Hall:2015}
Brian C. Hall, \emph{Lie Groups, Lie Algebras, and Representations}, Graduate Texts In Mathematics
\textbf{222} Second Edition, Springer, (2015)

\bibitem[IK12]{Ivarsson:2012}
Bj{\"o}rn Ivarsson and Frank Kutzschebauch, \emph{Holomorphic factorization of
  mappings into $\mbox{SL}_n(\mathbb{C})$}, Ann. of Math. (2)
\textbf{175}
  (2012), 45--69.

\bibitem[JMW86]{Jones:86}
P.W.Jones, D.Marshall, and T.Wolff, \emph{Stable rank of the disc algebra}, Proc. Amer.
Math. Soc. \textbf{96}, No.4 (1986), 603-604

\bibitem[Kop78]{Kopeiko:1978}
V.I.Kopeiko, \emph{The stabilization of symplectic groups over a polynomial ring},
Math. USSR Sbornik Vol. \textbf{34},No.5 (1978), 655-669 

\bibitem[Mil71]{Milnor:1971}
J.Milnor, \emph{Introduction to algebraic K-theory}, Princeton University Press,
N.J., University of Tokyo Press, Tokyo (1971), Annals of Mathematics Studies No.72

\bibitem[Rem98]{Remmert:1998}
R.Remmert, \emph{Classical Topics in Complex Function Theory}, Graduate Texts In Mathematics
\textbf{172}, Springer, (1998)


\bibitem[Sus77]{Suslin:1977}
A.A.Suslin, \emph{The structure of the special linear group over rings of
polynomials}, Izv.Akad. Nauk SSSR Ser. Mat. \textbf{41} (1977), 235--252

\bibitem[Tre92]{Treil:1992}
S.Treil, \emph{The stable rank of the algebra $\mbox{H}^{\infty} (\mathbf{D}) $ equals $1$},
J.Funct. Anal. \textbf{109} (1992), no.1, 130-154
 
\bibitem[Vas88]{Vaserstein:1988}
L.N.Vaserstein, \emph{Reduction of a matrix depending on parameters to a diagonal
form by addition operations}, Proc. Amer. Math. Soc. \textbf{103} (1988), 741-746 

\end{thebibliography}
\end{document}